\newtheorem{theorem}{Theorem}[section]
\newtheorem{lemma}[theorem]{Lemma}
\newtheorem{thm}[theorem]{Theorem}
\newtheorem{prop}[theorem]{Proposition}
\numberwithin{equation}{section}
\def\F{\mathcal{F}}
\def\P{\mathcal{P}}
\def\S{\mathcal{S}}
\def\eps{\varepsilon}
\def\Sh{\text{Sh}}
\def\VC{\text{VC}}
\def\ExVC{\text{ExVC}}
\def\Conn{\text{Conn}}
\def\IndMat{\text{IndMat}}
\def\COMMENT#1{}
\let\COMMENT=\footnote% COMMENT OUT for clean output
\title{Two results about the hypercube} 
\author{J\'ozsef Balogh\footnote{Department of Mathematical Sciences,
 University of Illinois at Urbana-Champaign, Urbana, Illinois 61801, USA, {\tt
jobal@math.uiuc.edu}.Research is partially supported by NSF Grant DMS-1500121 and Arnold O. Beckman Research Award (UIUC Campus Research Board 15006) and the Langan Scholar Fund (UIUC).
},
~Tam\'{a}s M\'{e}sz\'{a}ros\footnote{Freie Universit\"at Berlin, 14195 Berlin, Germany, {\tt tamas.meszaros@fu-berlin.de}. Research is supported by the DRS POINT Fellowship Program at Freie Universit\"at Berlin.},
 ~and Adam Zsolt Wagner\footnote{University of Illinois at Urbana-Champaign, Urbana, Illinois 61801, USA, {\tt
zawagne2@illinois.edu}. }}
\begin{document}
\maketitle
\begin{abstract}
First we consider families in the hypercube $Q_n$ with bounded VC dimension. Frankl raised the problem of estimating the number $m(n,k)$ of maximal families of VC dimension $k$. Alon, Moran and Yehudayoff showed that $$n^{(1+o(1))\frac{1}{k+1}\binom{n}{k}}\leq m(n,k)\leq n^{(1+o(1))\binom{n}{k}}.$$ We close the gap by showing that $\log \left(m(n,k)\right)= {(1+o(1))\binom{n}{k}}\log n$ and show how a tight asymptotic for the logarithm of the number of induced matchings between two adjacent small layers of $Q_n$ follows as a corollary.

Next, we consider the integrity $I(Q_n)$ of the hypercube, defined as $$I(Q_n) = \min\{ |S| +m(Q_n \setminus S) : S \subseteq V (Q_n) \},$$ where $m(H)$ denotes the number of vertices in the largest connected component of $H$. Beineke, Goddard, Hamburger, Kleitman, Lipman and Pippert showed that $c\frac{2^n}{\sqrt{n}} \leq I(Q_n)\leq C\frac{2^n}{\sqrt{n}}\log n$ and suspected that their upper bound is the right value. We prove that the truth lies below the upper bound by showing that $I(Q_n)\leq  C \frac{2^n}{\sqrt{n}}\sqrt{\log n}$.

\end{abstract}

\section{Introduction}

Throughout the paper we will use standard notation. For an integer $n\geq 1$ we will write $[n]$ for the set $\{1,2,\dots,n\}$, $\P(n)$ for its power set and ${\binom{[n]}{k}}$ for the collection of subsets of size $k$. For $\bigcup\limits_{i=0}^k{\binom{[n]}{i}}$ (resp.~$\sum\limits_{i=0}^k{\binom{n}{i}}$) we will use the shorthand notation ${\binom{[n]}{\leq k}}$ (resp.~${\binom{n}{\leq k}}$). 

\vspace{0.3cm}

For an integer $n\geq 1$ the graph $Q_n$, the hypercube of dimension $n$, has vertex set $V(Q_n)=\{0,1\}^n$ and two vertices are connected if they only differ in one coordinate. There is a natural bijection between the vertex set of $Q_n$ and $\P(n)$, and we will use them interchangeably. Here we consider several enumerative and extremal properties of vertex subsets of this graph.

\subsection{Enumerative problems}
We say that a family $\F\subseteq\P(n)$ \emph{shatters} a set $S\subseteq [n]$ if for all $A\subseteq S$ there exists a set $B\in \F$ with $B\cap S = A$. Let $\Sh(\F):=\{S\subseteq [n]: \F \text{ shatters }S\}$. The \emph{Vapnik-Chervonenkis dimension}, \emph{VC-dimension} for short, of a family $\F\subseteq \P(n)$ is defined as
\begin{equation*}
\VC(\F)=\max\{|S|: \F \text{ shatters }S\}.
\end{equation*}
Pajor's version~\cite{pajor} of the  Sauer-Shelah lemma states that  we always have $|\Sh(\F)|\geq |\F|$. We say that a family $\F\subseteq \P(n)$ is \emph{(shattering-)extremal}  if $|\Sh(\F)|=|\F|$. For example, if $\F$ is a down-set then it is extremal, simply because in this case $\Sh(\F)=\F$. For an integer $k\geq 0$ let $\ExVC(n,k)$ be the number of extremal families in $\P(n)$ with VC dimension at most $k$. The study of these extremal families was initiated by Bollob\'as, Leader and Radcliffe \cite{blr} and since then many interesting results have been obtained in connection with these combinatorial objects. 

The Sauer-Shelah lemma~\cite{sauershelah} states that for any family $\F\subseteq \P(n)$ we have $|\F|\leq \binom{n}{\leq \VC(\F)}$. A family is called \emph{maximal} if $|\F|=\binom{n}{\leq \VC(\F)}$. Clearly, every maximal family is extremal. Frankl~\cite{frankl} raised the question of estimating $m(n,k)$, the number of maximal families in $\P(n)$ of VC dimension $k$, and showed that
\begin{equation*}
2^{\binom{n-1}{k}}\leq m(n,k)\leq 2^{n\binom{n-1}{k}}.
\end{equation*}
Alon, Moran and Yehudayoff~\cite{alonmoran} showed that for constant $k\geq 2$ we have, as $n\rightarrow \infty$, that
\begin{equation}\label{alonbound}
n^{(1+o(1))\frac{1}{k+1}\binom{n}{k}}\leq m(n,k)\leq n^{(1+o(1))\binom{n}{k}}.
\end{equation}
We close the gap and show that the upper bound of~(\ref{alonbound}) is correct, even if we allow $k$ to grow as $k=n^{o(1)}$.

\begin{thm}\label{franklcounting}
Let $k=n^{o(1)}$. Then $m(n,k)= n^{(1+o(1))\binom{n}{k}}$.
\end{thm}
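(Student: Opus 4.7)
The target is the lower bound $m(n,k) \geq n^{(1-o(1))\binom{n}{k}}$; the matching upper bound is the right inequality in~(\ref{alonbound}). The plan is to inject the set of induced matchings in the bipartite containment graph $B_{n,k}$ between $\binom{[n]}{k}$ and $\binom{[n]}{k+1}$ (edges are pairs $K\subset T$ with $|T\setminus K|=1$) into the maximal VC-dimension-$k$ families, and then count these induced matchings sharply.

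For any induced matching $M$ in $B_{n,k}$, set
\[
\F_M = \binom{[n]}{\leq k-1} \;\cup\; \{K \in \tbinom{[n]}{k} : K \text{ unmatched in } M\} \;\cup\; \{T \in \tbinom{[n]}{k+1} : T \text{ matched in } M\}.
\]
A routine count gives $|\F_M| = \binom{n}{\leq k}$. Every $k$-set $S$ is shattered by $\F_M$: restrictions of size $\leq k-1$ come from $\binom{[n]}{\leq k-1}$, and the full restriction from $S$ itself (if unmatched) or from its matched partner $T\supset S$. No $(k+1)$-set $T$ is shattered: an unmatched $T$ is absent from $\F_M$; for a matched pair $(K_0,T)\in M$ the restriction $F\cap T = K_0$ would force $F\in \{K_0\}\cup\{K_0\cup\{j\}:j\notin T\}$, but $K_0\notin \F_M$ (it is matched) and any matched $K_0\cup\{j\}$ with $j\neq T\setminus K_0$ would introduce the extra edge $K_0\subset K_0\cup\{j\}$ between matched vertices, violating the induced property. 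The map $M\mapsto \F_M$ is injective because $\F_M$ exposes the matched $k$-sets (as $\binom{[n]}{k}\setminus \F_M$) and matched $(k+1)$-sets (as $\F_M\cap \binom{[n]}{k+1}$), and the induced property forces the unique matched $K\subset T$ partner for each matched $T$.

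What remains is to show that $B_{n,k}$ admits at least $n^{(1-o(1))\binom{n}{k}}$ induced matchings; the trivial upper bound $(n-k+1)^{\binom{n}{k}} = n^{(1+o(1))\binom{n}{k}}$ will then make this sharp and yield, as the abstract indicates, the asymptotic for induced matchings between adjacent small layers of $Q_n$. The ``shift-by-$x$'' matching $K\mapsto K\cup\{x\}$ (for $K\not\ni x$) is induced and of size $\binom{n-1}{k}=(1-o(1))\binom{n}{k}$, confirming that induced matchings can be very large. To produce many of them I would reveal a partner label $g(K)\in ([n]\setminus K)\cup\{*\}$ along a carefully chosen order on $\binom{[n]}{k}$ and control the conditional support of $g(K)$. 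Each $K$ has only $k(n-k)$ Johnson-graph neighbours and a given previously-matched neighbour excludes at most one value of $g(K)$, so an averaging/entropic argument should show that a $(1-o(1))$-fraction of the $k$-sets retain $(1-o(1))n$ admissible partner labels, contributing $(1-o(1))\log n$ bits of freedom each and hence the required $n^{(1-o(1))\binom{n}{k}}$ count.

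The main obstacle is precisely this enumeration. A naive union bound over conflict events fails once $k\to\infty$, and a plain keep-or-delete random construction loses a factor of $e^{-k}$ per $k$-set, which is too costly. The resolution must exploit that conflicts arise only between Johnson-graph neighbours, so the dependency structure is of degree $O(kn)$; this should allow, for instance, a Lov\'asz-local-lemma or Shearer-type argument together with a structured revealing order tied to a partition of $[n]$ to guarantee that for almost every $k$-set the admissible partner set retains size $n - o(n)$, yielding the sharp lower bound uniformly for $k = n^{o(1)}$.
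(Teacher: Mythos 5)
Your injection $M\mapsto\F_M$ is essentially the paper's map (all sets of size at most $k-1$, unmatched $k$-sets, matched $(k+1)$-sets), and your verification that no $(k+1)$-set is shattered and that the map is injective is sound; note that once $|\F_M|=\binom{n}{\leq k}$ and $\VC(\F_M)\leq k$ are known, maximality is automatic, so checking that every $k$-set is shattered is not needed. The problem is the second half. The statement you yourself identify as ``the main obstacle'' --- that the number of induced matchings between the layers $\binom{[n]}{k}$ and $\binom{[n]}{k+1}$ is at least $n^{(1-o(1))\binom{n}{k}}$ --- is never proved: you only sketch a revealing-order/entropy argument and then suggest that a Lov\'asz-local-lemma or Shearer-type argument ``should'' control the conflicts. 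As written this is a genuine gap, since the entire lower bound of the theorem rests on exactly this count, and no concrete dependency analysis, revealing order, or local-lemma application is specified that one could check.

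Moreover, the heavy machinery you reach for is unnecessary; the paper's construction removes the dependency problem outright rather than fighting it. Fix $\eps=\eps(n)=o(1/k^2)$ tending to $0$, let $A=\{1,\dots,\eps n\}$ and $B=[n]\setminus A$, and consider only matchings in which every matched $k$-set $C_1$ lies in $\binom{B}{k}$ and is matched to $C_1\cup\{a\}$ for some $a\in A$. Such a matching is automatically induced: any $(k+1)$-set used meets $A$ in exactly one element, so its only $k$-subset lying inside $B$ is its own partner, and distinct matched $k$-sets give distinct $(k+1)$-sets. Hence every assignment of an element of $A$ to every member of $\binom{B}{k}$ gives a distinct induced matching, so there are at least
\begin{equation*}
(\eps n)^{\binom{(1-\eps)n}{k}}\geq \left(n^{1-o(1)}\right)^{(1-2\eps)^{k}\binom{n}{k}}=n^{(1-o(1))\binom{n}{k}}
\end{equation*}
of them, using $k=n^{o(1)}$ and $\eps k=o(1)$. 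This one-paragraph count, combined with your injection and the upper bound from~(\ref{alonbound}), completes the proof; you should replace the speculative enumeration step with an argument of this kind (or carry out your proposed argument in full detail, which would be considerably harder).
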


A \emph{matching} in a graph $G = (V, E)$ is a set of edges $M \subseteq E$ without any common vertices. An \emph{induced matching} is a matching such that no endpoints of two edges of $M$ are joined by an edge of $G$. For an integer $k\geq 0$ let $\binom{[n]}{k}$ be the collection of those vertices of $Q_n$ which contain precisely $k$ ones. We will refer to these collections for different values of $k$ as the layers of the hypercube $Q_n$. Let further $\IndMat(n,k)$ be the number of induced matchings in $Q_n$ between the layers $\binom{[n]}{k}$ and $\binom{[n]}{k+1}$. Our next result concerns the quantities $\IndMat(n,k)$ and $\ExVC(n,k)$.

\begin{thm}\label{franklcor}
Let $k=n^{o(1)}$. Then $$n^{(1+o(1))\binom{n}{k}}\leq \IndMat(n,k)\leq m(n,k)\leq \ExVC(n,k)\leq n^{(1+o(1))\binom{n}{k}}.$$
\end{thm}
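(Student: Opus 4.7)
The chain contains one trivial link and three substantive ones. For $m(n,k)\le\ExVC(n,k)$, recall that Pajor gives $|\Sh(\F)|\ge|\F|$ for every family $\F$. If $\F$ is maximal then $|\F|=\binom{n}{\le k}$ and $\Sh(\F)\subseteq\binom{[n]}{\le k}$, so the inequality is forced to be equality; thus $\F$ is shattering-extremal with $\VC(\F)\le k$ and is counted by $\ExVC(n,k)$.

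For $\IndMat(n,k)\le m(n,k)$ I would construct an injection $M\mapsto\F(M)$ from induced matchings between $\binom{[n]}{k}$ and $\binom{[n]}{k+1}$ into maximal families, by setting
$$\F(M):=\Bigl(\binom{[n]}{\le k}\setminus M_{\text{lo}}\Bigr)\cup M_{\text{hi}},$$
where $M_{\text{lo}}$ and $M_{\text{hi}}$ denote the lower and upper endpoint sets of $M$. Then $|\F(M)|=\binom{n}{\le k}$, so Sauer--Shelah gives $\VC(\F(M))\ge k$. For the reverse $\VC(\F(M))\le k$, suppose a $(k+1)$-set $T$ is shattered. Realising the trace $T$ demands $T\in\F(M)\subseteq\binom{[n]}{\le k+1}$, forcing $T\in M_{\text{hi}}$, say $T=A_0\cup\{j_0\}$ matched to $A_0\in M_{\text{lo}}$. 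Realising the trace $A_0$ demands some $F\in\F(M)$ with $F\cap T=A_0$; since $A_0\notin\F(M)$, only $F=A_0\cup\{x\}\in M_{\text{hi}}$ with $x\notin T$ works. But this $F$ comes from an edge of $M$ different from $(A_0,T)$, and $A_0\sim F$ in $Q_n$ violates the induced matching condition. Injectivity is easy: $\F(M)$ recovers $M_{\text{lo}}$ and $M_{\text{hi}}$, and within an induced matching each $A\in M_{\text{lo}}$ has a unique match $B\in M_{\text{hi}}$ above it, since a second $B'\supset A$ in $M_{\text{hi}}$ would make $A$ adjacent to an endpoint of a different edge.

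For the lower bound $\IndMat(n,k)\ge n^{(1+o(1))\binom{n}{k}}$, I would take $\epsilon=1-o(1)$ satisfying $kn^{\epsilon-1}\to0$ (possible since $k=n^{o(1)}$), fix $X\subseteq[n]$ with $|X|=n-n^\epsilon$, and set $\mathcal{D}=\binom{X}{k}$. For every $\phi:\mathcal{D}\to[n]\setminus X$ the edge set $M_\phi=\{(A,A\cup\{\phi(A)\}):A\in\mathcal{D}\}$ is a matching (intersecting an upper endpoint with $X$ recovers its lower endpoint) and is induced: if $A_i\subset A_j\cup\{\phi(A_j)\}$ with $A_i,A_j\subseteq X$, intersecting with $X$ gives $A_i\subseteq(A_j\cup\{\phi(A_j)\})\cap X=A_j$, hence $A_i=A_j$. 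Distinct $\phi$'s give distinct $M_\phi$, yielding
$$\IndMat(n,k)\ge |[n]\setminus X|^{|\mathcal{D}|}=(n^\epsilon)^{(1-o(1))\binom{n}{k}}=n^{(1+o(1))\binom{n}{k}}.$$

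The main obstacle is the upper bound $\ExVC(n,k)\le n^{(1+o(1))\binom{n}{k}}$, which strengthens the Alon--Moran--Yehudayoff upper bound from maximal to all shattering-extremal families. Their encoding/labelling argument relies only on $|\F|\le\binom{n}{\le k}$ and on $\Sh(\F)\subseteq\binom{[n]}{\le k}$, both of which remain valid for extremal families via Pajor; I expect their argument to go through with essentially the same exponent provided one uses the identity $|\F|=|\Sh(\F)|$ in place of $|\F|=\binom{n}{\le k}$. Checking that no logarithmic factor is lost in this refinement is where the proof is most delicate.
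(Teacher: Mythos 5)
Your first three links are correct and essentially coincide with the paper's argument: maximal families are extremal (so $m(n,k)\le\ExVC(n,k)$); your map $M\mapsto\F(M)$ is exactly the paper's injection from induced matchings into maximal families, with the same trace argument ruling out a shattered $(k+1)$-set and the same reconstruction argument for injectivity; and your construction of $|[n]\setminus X|^{|\mathcal{D}|}$ matchings is the paper's ``good matchings'' count in a slightly different parametrization, and the asymptotics check out for $k=n^{o(1)}$.

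The genuine gap is the final inequality $\ExVC(n,k)\le n^{(1+o(1))\binom{n}{k}}$, which you leave at ``I expect their argument to go through.'' The two facts you propose to feed into the Alon--Moran--Yehudayoff argument, namely $|\F|\le\binom{n}{\le k}$ and $\Sh(\F)\subseteq\binom{[n]}{\le k}$ (or the identity $|\F|=|\Sh(\F)|$), are far too weak on their own: the number of arbitrary subfamilies of $\P(n)$ of size at most $\binom{n}{\le k}$ is of order $2^{\Theta(n\binom{n}{\le k})}$, i.e.\ $n$ raised to roughly $\binom{n}{k}\cdot n/\log n$, which dwarfs the target. The ingredient actually doing the work, both in the AMY bound for maximal families and in the paper's extension to extremal ones, is structural: every shattering-extremal family induces a \emph{connected} subgraph of $Q_n$ (Greco; for maximal families this connectivity is classical). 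Granting this, $\ExVC(n,k)\le\sum_{i=0}^{\binom{n}{\le k}}\Conn(n,i)$, and the standard bound that an $N$-vertex graph of maximum degree $D$ has at most $N(eD)^{\ell}$ connected subgraphs of size $\ell$ gives $\ExVC(n,k)\le 2^{n+1}(en)^{\binom{n}{\le k}}=n^{(1+o(1))\binom{n}{k}}$ for $k=n^{o(1)}$. Without invoking connectivity (or an equally strong structural property of extremal families), your sketch does not produce the claimed exponent, so this link needs an actual argument, not a routine adaptation check.
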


\subsection{The integrity of $Q_n$}

Next we consider the problem of finding a small family $\F\subseteq \{0,1\}^n$ such that all connected components of $Q_n\setminus \F$ are small. For a graph $H$ let $m(H)$ denote the maximum number of vertices in a component of $H$. The \emph{integrity} $I(G)$ of a graph $G$, introduced by Barefoot, Entringer and Swart~\cite{barefoot} to measure the vulnerability of a network, is defined by 
\begin{equation*}
I(G) = \min\{ |S| +
m(G \setminus S) : S \subseteq V (G) \}.
\end{equation*}
In~\cite{falseconj} it was conjectured that for the hypercube we have $I(Q_n)=2^{n-1}+1$, but Beineke, Goddard, Hamburger, Kleitman, Lipman and Pippert~\cite{kleitman} disproved this conjecture and obtained the following bounds:
\begin{thm}[\label{kleitmanthm}Beineke, Goddard, Hamburger, Kleitman, Lipman, Pippert]
There exists constants $c,C>0$ such that $$c\frac{2^n}{\sqrt{n}}\leq I(Q_n)\leq C\frac{2^n}{\sqrt{n}}\log n.$$
\end{thm}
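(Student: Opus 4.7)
The plan is to prove the two bounds separately. The lower bound is a clean isoperimetric argument, while the upper bound requires an explicit construction.

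For the lower bound, let $S\subseteq V(Q_n)$ realise the minimum in the definition of $I(Q_n)$, and write $m=m(Q_n\setminus S)$. I would order the connected components $C_1,C_2,\dots$ of $Q_n\setminus S$ arbitrarily and let $j$ be the smallest index with $|C_1|+\dots+|C_j|\geq 2^{n-1}$. Setting $A=C_1\cup\dots\cup C_j$ then gives $2^{n-1}\leq|A|\leq 2^{n-1}+m$, since each $|C_i|\leq m$. Because $A$ is a union of whole components of $Q_n\setminus S$, its vertex boundary $\partial A=N(A)\setminus A$ is contained in $S$. By Harper's vertex-isoperimetric inequality on $Q_n$, any set of size close to $2^{n-1}$ has $|\partial A|\geq(1-o(1))\binom{n}{\lceil n/2\rceil}=\Theta(2^n/\sqrt n)$, so $|S|\geq c\cdot 2^n/\sqrt n$, which gives $I(Q_n)\geq c\cdot 2^n/\sqrt n$.

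For the upper bound I would construct $S$ explicitly by a recursive layering scheme based on Hamming weight. The starting observation is that removing the single middle Hamming sphere $\{x:w(x)=\lfloor n/2\rfloor\}$ costs only $\Theta(2^n/\sqrt n)$ and splits $Q_n$ into two halves. The plan is to recurse on each surviving half by inserting a further Hamming sphere chosen carefully with respect to the remaining binomial mass, and to continue for $\Theta(\log n)$ rounds until every surviving slab has at most $O(2^n/\sqrt n)$ vertices. The aim is for the total separator to have size $O(2^n(\log n)/\sqrt n)$.

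The main obstacle is the bookkeeping for this upper bound. A naive recursive halving that uses a full middle layer at each step adds a wall of size $\Theta(2^n/\sqrt n)$ to $S$, and requires $\Theta(\sqrt n)$ such steps before the slabs near the centre become small enough, producing only the trivial bound $O(2^n)$. To obtain the logarithmic factor one must arrange for walls deeper in the recursion to be Hamming spheres located far enough from $n/2$ that their sizes are damped by a Gaussian factor, and balance the cut positions so that only $O(\log n)$ expensive central spheres are ever used. Making this trade-off precise is the technical content of~\cite{kleitman}; by contrast, the lower bound is a one-shot application of Harper's theorem.
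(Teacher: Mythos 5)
Note first that the paper does not prove Theorem~\ref{kleitmanthm} at all: it is quoted from~\cite{kleitman}, so your attempt can only be judged on its own merits. Your lower bound is essentially the standard (and correct) argument: taking a union $A$ of whole components with $2^{n-1}\leq |A|\leq 2^{n-1}+m$, noting $\partial A\subseteq S$, and applying Harper's vertex-isoperimetric theorem. One small point of care: "$|\partial A|\geq (1-o(1))\binom{n}{\lceil n/2\rceil}$ for sets of size close to $2^{n-1}$" silently assumes $m$ is small; either split into the trivial case $m\geq c\,2^n/\sqrt{n}$ first, or use the cleaner form $|\partial A|\geq \binom{n}{\lceil n/2\rceil}-m$, which gives $|S|+m\geq \binom{n}{\lceil n/2\rceil}$ directly. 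With that touch-up the lower bound is fine.

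The upper bound, however, is not a proof: you describe a recursive layering scheme, correctly diagnose that a naive version costs $\Theta(\sqrt{n})$ central cuts and only yields $O(2^n)$, and then state that making the trade-off precise "is the technical content of~\cite{kleitman}". That is circular --- you are citing the very result to be proved for its key step, so there is a genuine gap. Moreover, the construction you sketch (nested Hamming spheres around a single centre, i.e.\ cuts by Hamming weight alone) does not obviously recover the $\log n$ factor at all; the construction in~\cite{kleitman} uses a sequence of cuts that are \emph{orthogonal} to one another (cuts with respect to different coordinate blocks), which is where the $O(\log n)$ rounds come from. Alternatively, you could have closed the gap using exactly the machinery of Section~\ref{integrityproof} of this paper: deleting spheres $S_{r_0}(x)$ of radius $r_0=\lfloor n/2-\alpha\sqrt{n}\rfloor$ around greedily chosen centres, where an averaging argument (Lemma~\ref{easylemma}) shows each deleted sphere costs only an $O(\sqrt{\log n}/\sqrt{n})$ fraction of the vertices it isolates; this proves the stronger bound $I(Q_n)\leq C\,2^n\sqrt{\log n}/\sqrt{n}$ of Theorem~\ref{integrity} and hence, in particular, the upper bound of Theorem~\ref{kleitmanthm}. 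As written, your proposal establishes only the lower bound.
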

Their upper bound was obtained by a series of `orthogonal' cuts and they suspected it to be of the correct order of magnitude. We show that the true value of $I(Q_n)$ lies below their upper bound.
\begin{thm}\label{integrity}
There exists a constant $C>0$ such that the integrity of the hypercube satisfies $$I(Q_n)\leq C\frac{2^n}{\sqrt{n}}\sqrt{\log n}.$$
\end{thm}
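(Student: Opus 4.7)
My plan is to construct an explicit small vertex separator $S \subseteq V(Q_n)$ such that $|S|$ and $m(Q_n \setminus S)$ are each at most $C \cdot 2^n\sqrt{\log n}/\sqrt{n}$, refining the orthogonal-cut construction of Beineke, Goddard, Hamburger, Kleitman, Lipman, and Pippert (Theorem~\ref{kleitmanthm}).

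Fix an integer parameter $k$ (to be optimized) and partition $[n]$ into $k$ blocks $B_1, \ldots, B_k$ of equal size $m = n/k$. Take $S := \bigcup_{i=1}^{k} S_i$ where each
\[
S_i := \bigl\{x \in \{0,1\}^n : \bigl||x \cap B_i| - m/2\bigr| \le t_i\bigr\}
\]
is the vertex slab in the $B_i$-direction, with widths $t_i \ge 0$ to be chosen. By the union bound and Stirling's approximation, $|S| \le C \sum_i (2t_i+1) \cdot 2^n/\sqrt{m}$.

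To control components of $Q_n \setminus S$, associate to each surviving $x$ its sign vector $\sigma(x) \in \{+, -\}^k$, where $\sigma_i(x) = \mathrm{sgn}(|x \cap B_i| - m/2)$. An edge of $Q_n$ flips one coordinate and therefore alters exactly one block sum $|x \cap B_j|$ by $\pm 1$; so any path in $Q_n \setminus S$ preserves each $\sigma_j$, because changing $\sigma_j$ would require crossing $S_j$. Consequently every connected component of $Q_n \setminus S$ lies inside a single sign class, and a Chernoff/Gaussian-tail bound gives each sign class size at most $2^{n-k}\exp\!\bigl(-\Omega\bigl(\sum_i t_i^2/m\bigr)\bigr)$.

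The \textbf{main obstacle} is the simultaneous optimization of $k$ and the $t_i$'s to push both $|S|$ and the sign-class bound below $C\cdot 2^n\sqrt{\log n}/\sqrt n$. The naive choice $t_i \equiv 0$ forces $k \gtrsim (\log_2 n)/2$, yielding only a $(\log n)^{3/2}$ factor in $|S|$; introducing positive widths $t_i$ shrinks the sign classes via Gaussian tails and so in principle allows smaller $k$, but a short Cauchy--Schwarz estimate shows that a pure block decomposition cannot beat $(\log n)^{3/2}$. To reach the desired $\sqrt{\log n}$ factor, the block construction must therefore be supplemented by one or two auxiliary cuts of a different type --- most naturally an equatorial layer cut $\{x : w(x) = n/2\}$ of cost only $\binom{n}{n/2} = O(2^n/\sqrt n)$, which first separates the ``sign-central'' vertices (those near weight $n/2$, where the sign-class concentration is weakest) from the rest. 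In each of the two resulting half-cubes the remaining concentration is strong enough to let us take $k = \Theta(\sqrt{\log n})$ disjoint block cuts with appropriately tuned widths. Verifying this two-scale argument, and in particular checking that the interplay between the layer cut and the block cuts indeed drives every surviving component down to size $O(2^n\sqrt{\log n}/\sqrt n)$, is the technical crux of the proof.
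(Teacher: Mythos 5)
Your construction stops at exactly the point where the theorem would have to be proved: you acknowledge that pure block cuts cannot beat a $(\log n)^{3/2}$ factor, and you then assert, without verification, that an equatorial cut plus $k=\Theta(\sqrt{\log n})$ block cuts with tuned widths will work. A short count shows this rescue fails quantitatively. Your budget forces $\sum_i(2t_i+1)\lesssim \sqrt{\log n}\,\sqrt{m}/\sqrt{n}=\sqrt{\log n}/\sqrt{k}$, so with $k=\Theta(\sqrt{\log n})$ you get $\sum_i t_i^2/m = (k/n)\sum_i t_i^2 = O(\log n/n)=o(1)$; hence the Gaussian-tail gain in your sign-class bound is $1-o(1)$ and each sign class has size about $2^{n-k}=2^{n-\Theta(\sqrt{\log n})}$. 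Intersecting with a half-cube $\{w(x)>n/2\}$ (or adding any bounded number of auxiliary cuts of total cost $O(2^n\sqrt{\log n}/\sqrt n)$) only saves constant factors, e.g.\ a balanced sign class is roughly half inside each half-cube. Since $2^{-\Theta(\sqrt{\log n})}\gg \sqrt{\log n}/\sqrt n$, the surviving components are larger than the target bound by a factor $e^{\Theta(\log n)-\Theta(\sqrt{\log n})}$. The structural reason is that coordinate slabs are far too expensive for what they buy: each slab costs $\Theta(2^n\sqrt{k/n})$ but essentially only halves the components, so any product-type decomposition within this budget leaves components of size $2^{n-O(\sqrt{\log n})}$.

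The paper avoids product cuts altogether and instead deletes Hamming spheres. Set $r_0=\lfloor n/2-\alpha\sqrt n\rfloor$ with $\alpha\approx\tfrac12\sqrt{\log n}$; then $|B_{r_0}(x)|=\Theta(2^n\sqrt{\log n}/\sqrt n)$ while $|S_{r_0}(x)|=\Theta(2^n\log n/n)$, so a sphere costs only a $\Theta(\sqrt{\log n}/\sqrt n)$ fraction of the ball it encloses. Averaging over a uniformly random center shows that for any surviving set $\F$ there is a center $x$ with $|S_{r_0}(x)\cap\F|\le C\frac{\sqrt{\log n}}{\sqrt n}|B_{r_0}(x)\cap\F|$; deleting that sphere and peeling off the enclosed ball, and iterating until nothing remains, gives a deleted set of total size at most $C\frac{\sqrt{\log n}}{\sqrt n}2^n$, while every remaining component lies in some ball of radius $r_0$ and hence has size $O(2^n\sqrt{\log n}/\sqrt n)$. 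If you want to salvage your approach you would need separators with this sphere-like cost-to-volume ratio, which coordinate slabs and layer cuts in a product arrangement do not provide; as written, the ``technical crux'' you defer is not a verification issue but the missing idea, and the specific plan you propose does not reach the stated bound.
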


\vspace{0.3cm}

This note is organized as follows. We prove Theorems~\ref{franklcounting} and~\ref{franklcor} in Section~\ref{enumproof} and Theorem~\ref{integrity} in Section~\ref{integrityproof}. We often omit floor and ceiling signs when they are not crucial, to increase the clarity of our presentation.

\section{The proofs of Theorems~\ref{franklcounting} and~\ref{franklcor}}\label{enumproof}

\begin{prop}\label{countmaximals}
If $k=n^{o(1)}$ then $m(n,k)\geq \IndMat(n,k)\geq n^{(1+o(1))\binom{n}{k}}.$
\end{prop}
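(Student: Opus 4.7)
My plan is to prove the two inequalities $m(n,k)\geq \IndMat(n,k)$ and $\IndMat(n,k)\geq n^{(1+o(1))\binom{n}{k}}$ separately, the first via an injection from induced matchings to maximal VC-dimension-$k$ families, and the second via an explicit parameterized construction.

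For the first inequality, given an induced matching $M=\{(A_i,B_i) : A_i\subset B_i,\ i\in I\}$ between the layers $\binom{[n]}{k}$ and $\binom{[n]}{k+1}$, I form
\[
\F_M := \Big(\binom{[n]}{\leq k}\setminus\{A_i : i\in I\}\Big)\cup\{B_i : i\in I\}.
\]
This has size $\binom{n}{\leq k}$, so Sauer--Shelah gives $\VC(\F_M)\geq k$. Conversely, if $\F_M$ shattered a $(k+1)$-set $T$, then realizing $U=T$ would force some $F\in\F_M$ with $F\supseteq T$, hence $T=B_i$ for some $i$; then realizing $U=A_i$ would require some $F'\in\F_M$ with $F'\supseteq A_i$, and this $F'$ is either $A_i$ itself (impossible since $A_i$ was deleted) or a $B_j$ with $j\neq i$ and $A_i\subset B_j$, which gives an edge between two matched vertices and contradicts the induced-matching condition. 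Hence $\VC(\F_M)=k$ and $\F_M$ is maximal. The map $M\mapsto\F_M$ is injective because the $(k+1)$-sets of $\F_M$ are exactly the $B_i$, and for each $B_i$ the unique missing $k$-subset is $A_i$ (any other $A_j\subset B_i$ would again contradict the induced-matching condition).

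For the counting lower bound, set $t:=\lceil n/(k\log n)\rceil$, fix $T\subseteq [n]$ with $|T|=t$, and let $\mathcal{M}:=\binom{[n]\setminus T}{k}$. To each function $\phi:\mathcal{M}\to T\cup\{*\}$ I associate the edge set $M_\phi := \{(A, A\cup\{\phi(A)\}) : A\in\mathcal{M},\ \phi(A)\in T\}$. I claim $M_\phi$ is always an induced matching: any potential conflict between two edges $(A, A\cup\{i\})$ and $(A', A'\cup\{j\})$ requires $|A\cap A'|=k-1$, and a direct case analysis on the three conflict types ($A\cup\{i\}=A'\cup\{j\}$, $A\subset A'\cup\{j\}$, $A'\subset A\cup\{i\}$) forces either $i$ or $j$ to lie in the symmetric difference $A\triangle A'\subseteq [n]\setminus T$, contradicting $\phi(\mathcal{M})\subseteq T\cup\{*\}$. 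Since distinct $\phi$ yield distinct $M_\phi$, we get $\IndMat(n,k)\geq (t+1)^{\binom{n-t}{k}}$.

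Routine estimates using $k=n^{o(1)}$ finish the proof: $\binom{n-t}{k}/\binom{n}{k}\geq (1-t/(n-k+1))^k \geq 1-kt/(n-k+1) = 1-o(1)$ because $kt/n\sim 1/\log n$, and $\log(t+1)=\log n - \log k - \log\log n + O(1)=(1-o(1))\log n$ because $\log k$ and $\log\log n$ are both $o(\log n)$. The main conceptual obstacle is locating the right restriction: letting each $A\in\binom{[n]}{k}$ freely choose among its $n-k+1$ options produces $(n-k+1)^{\binom{n}{k}} = n^{(1+o(1))\binom{n}{k}}$ candidates, but a random such choice already has $\Theta(k\binom{n}{k})$ bad pairs in expectation and Lov\'asz-Local-Lemma–style repair is too lossy once $k\gg \log n$; restricting the targets to a tiny pool $T$ of size about $n/(k\log n)$ decouples all constraints completely at the cost of only a $1-o(1)$ factor in both $|\mathcal{M}|$ and $\log(t+1)$.
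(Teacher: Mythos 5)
Your proposal is correct and follows essentially the same route as the paper: the family $\F_M$ you build from an induced matching is exactly the paper's injection $\phi$ (all sets of size at most $k-1$, unmatched $k$-sets, matched $(k+1)$-sets), verified by the same shattering contradiction, and your lower-bound construction with the reserved pool $T$ is the paper's ``good matchings'' construction (there the added elements come from a set $A$ of size $\eps n$ with $\eps=o(1/k^2)$), differing only in the size of the reserved set and in allowing unmatched $k$-sets. Both yield $\IndMat(n,k)\geq n^{(1-o(1))\binom{n}{k}}$ with the same asymptotic bookkeeping.
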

\begin{proof}
Let $\S$ be the collection of all induced matchings between layers $\binom{[n]}{k}$ and $\binom{[n]}{k+1}$ and let $M(n,k)$ be the collection of all maximal families $\F\subset \P(n,k)$ of VC-dimension $k$. We will first define an injection $\phi: \S \rightarrow M(n,k)$ and then show $|\S|\geq n^{(1+o(1))\binom{n}{k}}$.

Given an element $S\in\S$, define $\phi(S)$ as follows: $\phi(S)$ contains all sets of size at most $k-1$, those sets of size $k$ which are not covered by edges in $S$ and those sets of size $k+1$ which are covered by edges in $S$. Observe that we can reconstruct $S$ from $\phi(S)\cap\left(\binom{[n]}{k}\cup \binom{[n]}{k+1}\right)$ hence we have $\phi(S_1)\neq \phi(S_2)$ for any $S_1,S_2\in\S$ with $S_1\neq S_2$. As $|\phi(S)|=\binom{n}{\leq k}$ it remains to show that $\VC(\phi(S))=k$ for all $S\in\S$.

As $\phi(S)\subset \binom{[n]}{\leq k+1}$ we have $VC(\phi(S))\leq k+1$. Suppose for contradiction that $\phi(S)$ shatters a set $A\in\binom{[n]}{k+1}$. Then since $\phi(S)\subset \binom{[n]}{\leq k+1}$ we must have $A\in\phi(S)$. This means that  the induced matching $S$ meets $A$, let the other endpoint of this edge be $B\in\binom{[n]}{k}$. Hence $B\notin \phi(S)$ and as $S$ is induced,  for all $A'\in\binom{[n]}{k+1}$ with $A'\neq A$ and $B\subset A'$ we have $A'\notin \phi(S)$. So there is no set $C\in\phi(S)$ with $C\cap A=B$, contradicting the assumption that $A$ is shattered by $\phi(S)$.

Now we turn to showing that $|\S|\geq n^{(1+o(1))\binom{n}{k}}$. Let $\eps=\eps(n)=o\left(1/k^2\right)$ be a sequence converging to zero as $n\rightarrow\infty$.  Let $A=\{1,2,\ldots, \eps n\}$ and $B=[n]\setminus A$. Call a matching $M$ between layers $\binom{[n]}{k}$ and $\binom{[n]}{k+1}$ \emph{good} if for every edge $(C_1,C_2)$ of $M$ we have that $C_1\in \binom{B}{k}$ and $C_2\in \binom{[n]}{k+1}\setminus \binom{B}{k+1}$. Every good matching is induced, and the number of good matchings that cover $\binom{B}{k}$ is already 
\begin{equation*}
(\eps n)^{\binom{(1-\eps)n}{k}}\geq \left(n^{1-o(1)}\right)^{(1-2\eps)^k\binom{n}{k}}=n^{(1-o(1))\binom{n}{k}}.
\end{equation*}
This completes the proof.
\end{proof}

\begin{proof}[Proof of Theorem~\ref{franklcounting}]
Let $k=n^{o(1)}$. By Proposition~\ref{countmaximals} we have $n^{(1+o(1))\binom{n}{k}}\leq m(n,k)$ and by~(\ref{alonbound}) we have $n^{(1+o(1))\binom{n}{k}}\geq m(n,k)$.
\end{proof}

\begin{proof}[Proof of Theorem~\ref{franklcor}]
Let $k=n^{o(1)}$.  For integers $n,m\geq 0$ let $\Conn(n,m)$ be the number of connected induced subgraphs of $Q_n$ on exactly $m$ vertices. First, notice that $m(n,k)\leq \ExVC(n,k)$ as every maximal family is also extremal. Second, as every extremal family induces a connected subgraph of $Q_n$ (see e.g.~\cite{greco}) and as by the Sauer inequality any family of VC dimension $k$ has size at most $\binom{n}{\leq k}$ we have 
\begin{equation*}
\ExVC(n,k)\leq \sum_{i=0}^{\binom{n}{\leq k}}\Conn(n,i).
\end{equation*}
We now proceed following the exact same ideas as the ones described in~\cite{alonmoran}. It is known (e.g., Problem~$45$ in \cite{bollobas}) that the number of connected subgraphs of size $\ell$ in a graph of order $N$ and maximum degree $D$ is at most $N(e(D-1))^{\ell-1}\leq N(eD)^{\ell}$. In our case, plugging in $\ell = i$, $N = 2^n$ and $D = n$ yields
\begin{equation*}
\ExVC(n,k)\leq \sum_{i=0}^{\binom{n}{\leq k}}2^n(en)^{i}\leq 2^{n+1} (en)^{\binom{n}{\leq k}}=n^{(1+o(1)){\binom{n}{k}}},
\end{equation*}
where for the last equality we used that $k=n^{o(1)}$. Hence together with Proposition~\ref{countmaximals} we have that
\begin{equation*}
n^{(1+o(1))\binom{n}{k}}\leq \IndMat(n,k)\leq m(n,k)\leq \ExVC(n,k)\leq n^{(1+o(1))\binom{n}{k}}.
\end{equation*}
\end{proof}

\section{The proof of Theorem~\ref{integrity}}\label{integrityproof}

Our goal in this section is to improve the upper bound from Theorem~\ref{kleitmanthm}. In~\cite{kleitman} a set was formed by a series of orthogonal cuts which yielded the upper bound in Theorem~\ref{kleitmanthm}. Instead, we will delete small spheres around some appropriately chosen points.

For a vertex $x$ of $Q_n$ and a non-negative integer $r$ for the ball of radius $r$ with center $x$ write
\begin{equation*}
B_{r}(x)=\{y\in V(Q_n)\ |\ d(y,x)\leq r\}
\end{equation*}
and for the sphere of radius $r$ with center $x$
\begin{equation*}
S_{r}(x)=\{y\in V(Q_n)\ |\ d(y,x)= r\},
\end{equation*}
where $d(.,.)$ denotes the Hamming distance. 

Let us define $\alpha$ and $r_0$ by the equations 
\begin{equation*}
\frac{1}{e^{2\alpha^2}\alpha}=\frac{\sqrt{\log n}}{\sqrt{n}}\quad\text{and}\quad r_0 := \Big\lfloor\frac{n}{2}-\alpha\sqrt{n}\Big\rfloor.
\end{equation*}
This $r_0$ will be the radius of the spheres we delete. Note that $\alpha$ is very close to $\sqrt{\log n}/2$. The proof of Theorem~\ref{integrity} will be based on the following standard estimates, we provide a full proof for completeness at the end of this section. 
\begin{lemma}\label{easylemma}
For every $x\in \P(n)$
\begin{equation*}
|B_{r_0}(x)|=\binom{n}{\leq r_0}=\Theta\left(\frac{2^n\sqrt{\log n}}{\sqrt{n}}\right)\quad \text{and}\quad |S_{r_0}(x)|\frac{\sqrt{n}}{\sqrt{\log n}}=\binom{n}{r_0}\frac{\sqrt{n}}{\sqrt{\log n}}=\Theta \left(\frac{2^n\sqrt{\log n}}{\sqrt{n}}\right).
\end{equation*}
\end{lemma}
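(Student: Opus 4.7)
\medskip

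\noindent\textbf{Plan of proof of Lemma~\ref{easylemma}.}
The plan is to compute the asymptotics of $\alpha$ and then apply the standard local central limit estimate for binomial coefficients near the median. Taking logarithms in the defining equation $\alpha e^{2\alpha^{2}} = \sqrt{n/\log n}$ gives $2\alpha^{2}+\log\alpha = \tfrac12\log(n/\log n)$; since the $\log\alpha$ term is subdominant this forces $\alpha = (1+o(1))\sqrt{\log n}/2$, and in particular $\alpha = o(n^{1/6})$. I would record the clean identity
\begin{equation*}
e^{-2\alpha^{2}} \;=\; \alpha \cdot \frac{\sqrt{\log n}}{\sqrt{n}},
\end{equation*}
which is the form in which $\alpha$ will enter every subsequent estimate.

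Next I would use the fact (a direct consequence of Stirling) that for $k=n/2-t\sqrt{n}$ with $t=o(n^{1/6})$,
\begin{equation*}
\binom{n}{k} \;=\; (1+o(1))\,\frac{2^{n}}{\sqrt{\pi n/2}}\,e^{-2t^{2}}.
\end{equation*}
Applied with $t=\alpha$ (and since shifting $k$ by at most $1$ for the floor changes the value by a $1+o(1)$ factor) this gives
\begin{equation*}
\binom{n}{r_{0}} \;=\; \Theta\!\left(\frac{2^{n}}{\sqrt{n}}\,e^{-2\alpha^{2}}\right) \;=\; \Theta\!\left(\frac{2^{n}\alpha\sqrt{\log n}}{n}\right),
\end{equation*}
from which $|S_{r_{0}}(x)|\cdot\sqrt{n}/\sqrt{\log n} = \Theta(2^{n}\alpha/\sqrt{n}) = \Theta(2^{n}\sqrt{\log n}/\sqrt{n})$ follows immediately using $\alpha = \Theta(\sqrt{\log n})$.

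For the ball estimate I would use the standard geometric-tail argument. For $j\leq r_{0}$ one has $\binom{n}{j-1}/\binom{n}{j}=j/(n-j+1)\leq r_{0}/(n-r_{0}+1) = 1-(4+o(1))\alpha/\sqrt{n}$, so the binomial coefficients $\binom{n}{j}$ for $j\leq r_{0}$ are dominated by a geometric series with ratio bounded away from $1$ by $\Theta(\alpha/\sqrt n)$. Hence
\begin{equation*}
\binom{n}{\leq r_{0}} \;=\; \Theta\!\left(\binom{n}{r_{0}}\cdot\frac{\sqrt{n}}{\alpha}\right) \;=\; \Theta\!\left(\frac{2^{n}\sqrt{\log n}}{\sqrt{n}}\right),
\end{equation*}
which is the remaining claim.

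The only mild obstacle is justifying the local central limit approximation in the regime $t=\alpha=\Theta(\sqrt{\log n})$; here $t$ is well within the range $o(n^{1/6})$ where Stirling's formula gives a multiplicative $1+o(1)$ error in $\binom{n}{n/2-t\sqrt n}$, so this is routine. Everything else reduces to the single algebraic substitution $e^{-2\alpha^{2}} = \alpha\sqrt{\log n}/\sqrt{n}$ and the geometric-series tail bound, both of which are textbook.
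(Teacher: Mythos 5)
Your plan is sound and reaches the same two estimates as the paper, but by a somewhat different route for the sphere. The paper never invokes Stirling or a local CLT: it writes $\binom{n}{r_0}=\binom{n}{n/2}\prod_{i=1}^{\alpha\sqrt n}\bigl(1-\tfrac{2i-1}{n/2+i}\bigr)$ and squeezes this telescoping product between $e^{-x}$-type bounds ($1-x\le e^{-x}$ and $1-x\ge e^{-x/(1-x)}$) to get $\binom{n}{r_0}=\Theta\bigl(e^{-2\alpha^2}\binom{n}{n/2}\bigr)$ directly, and then, exactly as you do, feeds in the defining identity $e^{-2\alpha^2}=\alpha\sqrt{\log n}/\sqrt n$ and $\alpha=(1+o(1))\sqrt{\log n}/2$. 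Your appeal to the standard estimate $\binom{n}{n/2-t\sqrt n}=(1+o(1))\,2^n\sqrt{2/(\pi n)}\,e^{-2t^2}$ for $t=o(n^{1/6})$ is a legitimate shortcut (and $\alpha=\Theta(\sqrt{\log n})$ is comfortably inside that range), at the cost of citing a black box where the paper is self-contained; for the ball, both arguments are geometric-tail bounds, though the paper works in blocks of length $\sqrt n/\alpha$ and shows each block ratio lies between $1.1$ and $10^{10}$.

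One point needs an extra line in your write-up: the statement that the $\binom{n}{j}$, $j\le r_0$, are ``dominated by a geometric series with ratio $1-\Theta(\alpha/\sqrt n)$'' gives only the upper bound $\binom{n}{\le r_0}=O\bigl(\binom{n}{r_0}\sqrt n/\alpha\bigr)$; the matching lower bound $\Omega\bigl(\binom{n}{r_0}\sqrt n/\alpha\bigr)$ does not follow from domination alone. You need to also observe that the roughly $\sqrt n/\alpha$ coefficients just below $r_0$ are each $\Omega\bigl(\binom{n}{r_0}\bigr)$ --- e.g.\ because for $j$ in $[r_0-\sqrt n/\alpha,\,r_0]$ the one-step ratio is at least $1-O(\alpha/\sqrt n)$, so the total decay over $\sqrt n/\alpha$ steps is a constant factor (this is precisely the paper's bound $\binom{n}{r_0}\binom{n}{r_0-\sqrt n/\alpha}^{-1}\le 10^{10}$), or alternatively because your local CLT formula applied at $t\in[\alpha,\alpha+1/\alpha]$ gives $e^{-2t^2}=\Theta(e^{-2\alpha^2})$. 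With that sentence added, the argument is complete.
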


The proof of Theorem~\ref{integrity} follows by repeatedly removing spheres of radius $r_0$ around some appropriately chosen points.

\begin{proof}[Proof of Theorem~\ref{integrity}]
For a family $\F\subset \P(n)$ and $x\in\P(n)$ let $B(\F,x):=B_{r_o}(x)\cap \F$ and $S(\F,x):=S_{r_0}(x)\cap \F$. Fixing the family $\F$ and picking $x\in\P(n)$ uniformly at random,  using linearity of expectation and Lemma~\ref{easylemma}, for the expectations of $|B(\F,x)|$ and $|S(\F,x)|$ we get
\begin{equation*}
E_x(|B(\F,x)|)=\Theta\left(|\F|\frac{\sqrt{\log n}}{\sqrt{n}}\right)\quad \text{and}\quad E_x(|S(\F,x)|)=\Theta \left(|\F|\frac{\log n}{n}\right).
\end{equation*}
In particular there is an absolute constant $C>0$ such that for every $n$ and $\emptyset\neq\F\subseteq\P(n)$ there exists an $x=x(\F)$ satisfying 
\begin{equation*}
|S(\F,x)|\leq |B(\F,x)|\frac{C\sqrt{\log n}}{\sqrt{n}} .
\end{equation*}
Now to formalize our strategy, fix such a function $x: \P(\P(n))\setminus\{\emptyset\}\rightarrow \P(n)$. Let $\F_0=2^{[n]}$, $S_0=\emptyset$ and for $i=0,1,\ldots$ set $\F_{i+1}:=\F_{i}\setminus B(\F_{i},x(\F_i))$ and $S_{i+1}:=S_{i} \cup S(\F_{i},x(\F_i))$. So,  each time peel off new components by removing some appropriately chosen sphere of radius $r_0$. Let $\ell$ be the least integer such that $\F_\ell=\emptyset$. Then 
\begin{equation*}
|S_\ell|=\sum_{i=0}^{\ell-1}S(\F_i,x(\F_i))\leq \frac{C\sqrt{\log n}}{\sqrt{n}}\sum_{i=0}^{\ell-1}B(\F_i,x(\F_i))=\frac{C\sqrt{\log n}}{\sqrt{n}}2^n.
\end{equation*}
Deleting $S_\ell$ from $Q_n$ leaves a graph with all components being contained in some ball of radius $r_0$, and by Lemma~\ref{easylemma} having size at most $\binom{n}{\leq r_0}=\Theta\left(\frac{2^n\sqrt{\log n}}{\sqrt{n}}\right)$.
\end{proof}

All that remains is to prove Lemma~\ref{easylemma}.

\begin{proof}[Proof of Lemma~\ref{easylemma}]
As for $n$ fixed, the function $f(x)=\binom{n}{x}\binom{n}{x-1}^{-1}=\frac{n-x+1}{x}$ is decreasing we have that
\begin{align*}
&\binom{n}{r_0}\binom{n}{r_0-\frac{\sqrt{n}}{\alpha}}^{-1}=\prod_{i=r_0-\frac{\sqrt{n}}{\alpha}+1}^{r_0}\binom{n}{i}\binom{n}{i-1}^{-1}\geq \left(\binom{n}{r_0}\binom{n}{r_0-1}^{-1}\right)^{\sqrt{n}/\alpha}\\
=&\left(\frac{n-r_0+1}{r_0}\right)^{\sqrt{n}/\alpha}\geq\left(\frac{n/2}{n/2-\alpha\sqrt{n}}\right)^{\sqrt{n}/\alpha}\geq \left(1+\frac{\alpha}{\sqrt{n}}\right)^{\sqrt{n}/\alpha}\geq 1.1.
\end{align*}
Also note that, again using that $f(x)$ is decreasing, the same lower bound holds if we replace $r_0$ by any other value $r$ with $\frac{\sqrt{n}}{\alpha}\leq r\leq r_0$.  

Similarly
\begin{align*}
&\binom{n}{r_0}\binom{n}{r_0-\frac{\sqrt{n}}{\alpha}}^{-1}=\prod_{i=r_0-\frac{\sqrt{n}}{\alpha}+1}^{r_0}\binom{n}{i}\binom{n}{i-1}^{-1}\leq \left(\binom{n}{r_0-\frac{\sqrt{n}}{\alpha}+1}\binom{n}{r_0-\frac{\sqrt{n}}{\alpha}}^{-1}\right)^{\sqrt{n}/\alpha}\\
=& \left(\frac{n-r_0+\frac{\sqrt{n}}{\alpha}}{r_0-\frac{\sqrt{n}}{\alpha}+1}\right)^{\sqrt{n}/\alpha} \leq \left(\frac{n/2+2\alpha\sqrt{n}}{n/2-2\alpha\sqrt{n}}\right)^{\sqrt{n}/\alpha}\leq \left(1+\frac{10\alpha}{\sqrt{n}}\right)^{\sqrt{n}/\alpha}\leq 10^{10}.
\end{align*}
Accordingly,
\begin{equation*}
\binom{n}{\leq r_0}\geq \sum_{i=r_0-\frac{\sqrt{n}}{\alpha}}^{r_0}\binom{n}{i}\geq \frac{\sqrt{n}}{\alpha}\binom{n}{r_0-\frac{\sqrt{n}}{\alpha}}=\frac{\sqrt{n}}{\alpha}\binom{n}{r_0}\left(\binom{n}{r_0-\frac{\sqrt{n}}{\alpha}}\binom{n}{r_0}^{-1}\right)\geq\Omega\left(\binom{n}{r_0}\frac{\sqrt{n}}{\sqrt{\log n}}\right)
\end{equation*}
and
\begin{align*}
&\binom{n}{\leq r_0}=\sum_{j=0}^{r_0 \frac{\alpha}{\sqrt{n}}}\sum_{i=r_0-(j+1)\frac{\sqrt{n}}{\alpha}+1}^{r_0-j\frac{\sqrt{n}}{\alpha}}\binom{n}{i}\leq \sum_{j=0}^{r_0 \frac{\alpha}{\sqrt{n}}} \frac{\sqrt{n}}{\alpha}\binom{n}{r_0-j\frac{\sqrt{n}}{\alpha}}=\frac{\sqrt{n}}{\alpha}\binom{n}{r_0} \sum_{j=0}^{r_0 \frac{\alpha}{\sqrt{n}}}\binom{n}{r_0-j\frac{\sqrt{n}}{\alpha}}\binom{n}{r_0}^{-1}\\
=&\frac{\sqrt{n}}{\alpha}\binom{n}{r_0} \sum_{j=0}^{r_0 \frac{\alpha}{\sqrt{n}}}\prod_{i=0}^{j-1}\binom{n}{r_0-(i+1)\frac{\sqrt{n}}{\alpha}}\binom{n}{r_0-i\frac{\sqrt{n}}{\alpha}}^{-1}\leq \frac{\sqrt{n}}{\alpha}\binom{n}{r_0} \sum_{j=0}^{r_0 \frac{\alpha}{\sqrt{n}}} \left(\frac{1}{1.1}\right)^j=O\left(\binom{n}{r_0}\frac{\sqrt{n}}{\sqrt{\log n}}\right).
\end{align*}
These inequalities together give
\begin{equation*}
\binom{n}{\leq r_0}=\Theta\left(\binom{n}{r_0}\frac{\sqrt{n}}{\sqrt{\log n}}\right).
\end{equation*}
On the other hand we have
\begin{equation*}
\binom{n}{r_0}=\binom{n}{n/2}\prod_{i=r_0}^{n/2-1}\binom{n}{i}\binom{n}{i+1}^{-1}=\binom{n}{n/2}\prod_{i=1}^{\alpha\sqrt{n}}\frac{n/2-i+1}{n/2+i}=\binom{n}{n/2}\prod_{i=1}^{\alpha\sqrt{n}}\left(1-\frac{2i-1}{n/2+i}\right),
\end{equation*}
and hence, using the inequality $1-x\leq e^{-x}$, we get
\begin{equation*}
\binom{n}{r_0}\leq \binom{n}{n/2}\prod_{i=1}^{\alpha\sqrt{n}}e^{-\frac{2i-1}{n/2+i}}=\binom{n}{n/2}e^{-\sum_{i=1}^{\alpha\sqrt{n}}\frac{2i-1}{n/2+i}}
=O\left(e^{-2\alpha^2}\binom{n}{n/2}\right),
\end{equation*}
and similarly, using the inequality $1-x\geq e^{-\frac{x}{1-x}}$ for $0<x<1$, we get 
\begin{equation*}
\binom{n}{r_0}\geq \binom{n}{n/2}\prod_{i=1}^{\alpha\sqrt{n}}e^{-\frac{2i-1}{n/2-i+1}}=\binom{n}{n/2}e^{-\sum_{i=1}^{\alpha\sqrt{n}}\frac{2i-1}{n/2-i+1}}
=\Omega\left(e^{-2\alpha^2}\binom{n}{n/2}\right).
\end{equation*}
Together this gives
\begin{equation*}
\binom{n}{r_0}=\Theta\left(e^{-2\alpha^2}\binom{n}{n/2}\right)=\Theta\left(\frac{\log n}{\sqrt{n}}\cdot\frac{2^n}{\sqrt{n}}\right)=\Theta\left(\frac{2^n\log n}{n}\right),
\end{equation*}
and the claim follows.
\end{proof}

\textbf{Acknowledgement:} We are grateful to Shagnik Das for his many insightful comments regarding the manuscript.

\end{document}